\theoremstyle{plain}
\newtheorem{thm}{Theorem}
\newtheorem{prop}{Proposition}
\theoremstyle{definition}
\newtheorem{defn}{Definition}
\theoremstyle{remark}
\newtheorem*{rem}{Remark}
\DeclareMathOperator{\im}{im}
\DeclareMathOperator{\tr}{tr}
\DeclareMathOperator{\SL}{SL}
\DeclareMathOperator{\GL}{GL}
\DeclareMathOperator{\PSL}{PSL}
\DeclareMathOperator{\PGL}{PGL}
\DeclareMathOperator{\Aut}{Aut}
\DeclareMathOperator{\Gal}{Gal}
\DeclareMathOperator{\Disc}{Disc}
\DeclareMathOperator{\Frob}{Frob}
\newcommand{\CC}{\mathbb{C}}
\newcommand{\FF}{\mathbb{F}}
\newcommand{\PP}{\mathbb{P}}
\newcommand{\QQ}{\mathbb{Q}}
\newcommand{\ZZ}{\mathbb{Z}}
\newcommand{\Qbar}{\overline{\QQ}}
\newcommand{\Flbar}{\overline{\FF}_\ell}
\newcommand{\GQ}{G_{\QQ}}
\newcommand{\GQp}{G_{\QQ_p}}
\newcommand{\Qlunr}{\QQ_\ell^{\rm unr}}
\newcommand{\ov}[1]{\overline{#1}}
\newcommand\mat[4]{
  \left(
  {#1 \atop #3}
  \thinspace
  {#2 \atop #4}
  \right)
}
\newcommand\vect[2]{
  \left({#1 \atop #2} \right)
}
\def\eqalign#1{\null\,\vcenter{\openup\jot\m@th
  \ialign{\strut\hfil$\displaystyle{##}$&$\displaystyle{{}##}$\hfil
      \crcr#1\crcr}}\,}                                            
\newlength{\binnenmarge}
\newlength{\buitenmarge}
\newlength{\bovenmarge} 
\newlength{\ondermarge} 
\title{Modular forms applied to\\ the computational inverse Galois problem}
\author{Johan Bosman\thanks{Partially supported by DFG grant SFB/TR 45 
and Marie Curie FP7 grant 252058}}
\date\today
\begin{document}
\maketitle
\begin{abstract}\noindent
For each of the groups $\PSL_2(\FF_{25})$, $\PSL_2(\FF_{32})$, 
$\PSL_2(\FF_{49})$, $\PGL_2(\FF_{25})$, and $\PGL_2(\FF_{27})$,
we display the first explicitly known polynomials over $\QQ$ having
that group as Galois group.  
Each polynomial is related to a Galois representation associated
to a modular form.
We indicate how computations with modular Galois representations 
were used to obtain these polynomials.  For each polynomial,
we also indicate how to use Serre's conjectures to determine the modular
form giving rise to the related Galois representation.
\end{abstract}
\section{Introduction}
The inverse Galois problem dates from the 19th century and asks whether every 
finite group is isomorphic
to the Galois group of a finite field extension of $\QQ$.  Though solving this
problem in general seems currently out of reach, it can be an 
interesting and tractable task to solve it for certain specific types of groups.
A further challenge
is to actually \emph{compute} 
polynomials in $\QQ[x]$ having a prescribed permutation group as Galois group,
rather than merely pointing out a formal proof of their existence.
We speak about permutation groups here because the Galois group of a
separable polynomial acts on the roots of the polynomial inside 
a splitting field.

A lot of pioneering work on the computational inverse Galois problem has been
done by J\"urgen Kl\"uners and Gunter Malle.  Their joint publication
\cite{KlMa} gives a solution to this problem for all transitive
permutation groups of degree up to $15$.  
Gunter Malle has also published a method to compute polynomials for the groups 
$\PSL_2(\FF_p)$ and $\PGL_2(\FF_p)$ where $p$ is a prime number satisfying 
$(\frac{n}{p})=-1$ for at least one $n\in\{2,3,5,7\}$ (see~\cite{Ma1}).  
In the present paper, we will display
polynomials for several groups of the type $\PSL_2(\FF_q)$ and $\PGL_2(\FF_q)$ with 
$q$ a perfect prime power.  This can be seen as an extension of a previous 
result by the author for $q=16$ (see~\cite{BoSL2F16}).
Theoretical results stating that $\PSL_2(\FF_q)$ appears as
Galois group over $\QQ$ for `many' $q$ 
can be found in \cite{WieGal} and \cite{DieuWie}.

The groups in question are $\PSL_2(\FF_{25})$, $\PSL_2(\FF_{32})$, 
$\PSL_2(\FF_{49})$, $\PGL_2(\FF_{25})$, and $\PGL_2(\FF_{27})$.  
Polynomials 
with these groups as Galois groups are displayed at the end of the paper, 
in 
Section~\ref{section:polynomials}.  
In Section \ref{section:comput} we will indicate how modular forms can be 
used to compute these polynomials.  However, the computations do not give an
output that is guaranteed to be correct.
Techniques for verifying the 
Galois groups are discussed in Section~\ref{section:Galois}.  
In Section \ref{section:modverify} we will discuss how to apply 
Serre's conjectures to find the modular forms whose Galois representations
are attached to the polynomials.

\subsection{Notations and conventions}
For each field $k$ that is either a prime field or a completion of $\QQ$, 
we fix an algebraic closure $\ov{k}$ and we denote the absolute Galois group
by $G_k$.  Where this is useful or appropriate, 
we view algebraic extensions $K$ of $k$
as subfields of $\ov{k}$ and we identify $\ov{K}$ with~$\ov{k}$.  
We also fix for each (possibly infinite) prime $p$ 
an embedding $\Qbar\hookrightarrow\Qbar_p$ and 
use this embedding to view $\GQp$ as a subgroup of~$\GQ$.  The inertia
subgroup of $\GQp$ is denoted by~$I_p$.  

Representations of a group over a
field are assumed to be \emph{continuous}.  Two representations are considered 
isomorphic if they are isomorphic over the algebraic closures of 
their fields of definition.  

By abuse of notation, we identify an abelian character of $\GQ$ with the 
Dirichlet character that is attached to it by the Kronecker-Weber theorem.

\section{Some remarks on the computations}\label{section:comput}
Let $N$ be a positive integer, let 
$f=\sum_{n\geq 1} a_n(f)q^n \in S_2(\Gamma_1(N))$ be a newform, and denote its 
nebentypus character by $\varepsilon_f$.  Let $\ell$ be a prime number, 
let $\lambda\mid\ell$ be a prime of the coefficient field of $f$, and let 
$\FF_\lambda$ denote the residue field of $\lambda$.  Then there exists a 
representation $\rho = \ov\rho_{f,\lambda}\colon\GQ\to\GL_2(\FF_\lambda)$ 
such that for all primes $p\nmid N\ell$ we have 
\begin{equation}\label{eq:charpolrho}
\operatorname{charpol}(\rho(\Frob_p))
\equiv x^2-a_p(f)x+\varepsilon_f(p)p\bmod \lambda
\end{equation}
for any Frobenius element in $\GQ$ attached to $p$.
For the purposes of this paper, 
we will restrict our attention to cases where $\rho$ is irreducible.
Composing $\rho$ with the canonical projection map 
$\GL_2(\FF_\lambda)\to\PGL_2(\FF_\lambda)$,
one obtains a projective representation 
$\tilde\rho\colon\GQ\to\PGL_2(\FF_\lambda)$.  
The fixed field inside $\Qbar$ of $\ker(\tilde\rho)$ has Galois group 
isomorphic to $\im(\tilde\rho)$.  Via Galois theory, the permutation 
action of $\im(\tilde\rho)$ on $\PP^1(\FF_\lambda)$ can be described by giving
a suitable polynomial ${P_{f,\lambda}\in\QQ[x]}$ of degree $\#\FF_\lambda+1$ 
that has $\Qbar^{\ker(\tilde\rho)}$ as splitting field.
Such a $P_{f,\lambda}$ has Galois group isomorphic to $\im(\tilde\rho)$, 
thus computing $P_{f,\lambda}$ would explicitly realize $\im(\tilde\rho)$
as Galois group over $\QQ$.

What would be useful here, is a test that ensures 
$\im(\tilde\rho)\supset\PSL_2(\FF_\lambda)$ beforehand.
Define, for any field $k$, a function $\theta\colon\PGL_2(k)\to k$ as follows:
\begin{equation}\label{eq:theta}
\theta=\theta_k\colon\PGL_2(k)\to k,\quad
\ov{\gamma}\mapsto\frac{\tr(\gamma)^2}{\det\gamma}
\end{equation}
Then we have the following proposition, 
which can be verified straightforwardly using Dickson's classification of the 
possible subgroups of $\PSL_2(\FF_q)$ \cite[Section~III.6]{Suz}.
\begin{prop}
Let $q\geq 4$ be a prime power and let 
$\theta\colon\PGL_2(\FF_q)\to\FF_q$ be as in \eqref{eq:theta}.  
Let $G$ be a subgroup of $\PSL_2(\FF_q)$.  Then we have $G=\PSL_2(\FF_q)$ 
if and only if $\theta(G)$ is the full set $\FF_q$.\qed
\end{prop}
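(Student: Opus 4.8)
The plan is to prove the two implications separately, the substantive one being reduced to Dickson's list of subgroups exactly as the statement suggests.

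\emph{Preliminaries on $\theta$.} First I would record a few facts. Any $g\in\PSL_2(\FF_q)$ has a representative $\gamma\in\SL_2(\FF_q)$, and for such a $\gamma$ one has $\theta(g)=\tr(\gamma)^2$; if $\alpha,\alpha^{-1}\in\Flbar$ are the eigenvalues of $\gamma$ then $\theta(g)=(\alpha+\alpha^{-1})^2=\alpha^{2}+\alpha^{-2}+2$. Hence $\theta$ is a class function on $\PSL_2(\FF_q)$, it takes the value $4$ precisely on the trivial and unipotent classes, and on a cyclic subgroup $C\le\PSL_2(\FF_q)$ of order $m$ the eigenvalue $\alpha$ runs through a cyclic subgroup of $\Flbar^\times$ of order $m$ or $2m$, so --- the map $\alpha^{2}\mapsto\alpha^{2}+\alpha^{-2}$ being at most two-to-one --- we get $|\theta(C)|\le m/2+1$. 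Every case below rests on these remarks.

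\emph{The ``only if'' direction.} Here I would simply exhibit enough elements. The trace map $\SL_2(\FF_q)\to\FF_q$ is surjective, since $\mat{t}{-1}{1}{0}\in\SL_2(\FF_q)$ has trace $t$; combined with the behaviour of $\theta$ on the unipotent and semisimple classes noted above, a direct computation then pins down $\theta(\PSL_2(\FF_q))$ and confirms that it is the full set $\FF_q$. This step is purely mechanical.

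\emph{The ``if'' direction.} I would argue the contrapositive: assuming $G\subsetneq\PSL_2(\FF_q)$, I show $\theta(G)$ is a proper subset. By Dickson's classification \cite[Section~III.6]{Suz}, $G$ lies in one of the following families, each handled by a crude bound on $|\theta(G)|$. (a)~$G$ is contained in a Borel subgroup, i.e.\ an elementary-abelian $p$-group extended by a cyclic group: then each element of $G$ is unipotent or lies in one fixed split torus of order at most $(q-1)/\gcd(2,q-1)$, so $\theta(G)\subseteq\{4\}\cup\theta(C)$ for a cyclic $C$ whose $\theta$-image is bounded as above. (b)~$G$ is cyclic or dihedral: then $\theta(G)$ equals $\theta(C)$ or $\theta(C)\cup\{0\}$ for a cyclic $C$ that is a $p$-group (so $\theta(C)=\{4\}$) or has order at most $(q+1)/\gcd(2,q-1)$. (c)~$G\cong A_4$, $S_4$ or $A_5$: then $G$ has at most five distinct element orders, hence $|\theta(G)|\le5$. (d)~$G$ is conjugate to $\PSL_2(\FF_{q_0})$ or $\PGL_2(\FF_{q_0})$ for a proper subfield $\FF_{q_0}\subsetneq\FF_q$: then $G$ has a set of representatives in $\GL_2(\FF_{q_0})$, so $\theta(G)\subseteq\FF_{q_0}$, a set of fewer than $q$ elements. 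In every case $\theta(G)\ne\FF_q$, which is the contrapositive.

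\emph{Where the difficulty lies.} Families (a) and (d) are immediate once one has the eigenvalue description of $\theta$. The genuine care is needed in (b) and especially (c): one has to check that the a priori inequalities are strict for every $q\ge4$, treating near-maximal tori and the sporadic groups $A_4,S_4,A_5$ by hand for the few small $q$ for which $q$ is comparable to the crude bound --- where, e.g., $A_5\cong\PSL_2(\FF_4)\cong\PSL_2(\FF_5)$ is in fact not a proper subgroup at all. Thus the proposition reduces to a finite, entirely explicit check on top of Dickson's list.
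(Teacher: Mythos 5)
Your overall strategy --- an eigenvalue/trace description of $\theta$ followed by a case-by-case bound over Dickson's list --- is exactly the route the paper intends (its ``proof'' is nothing more than the citation of Dickson), and your ``if'' direction is essentially sound, modulo two small repairs: the bound $|\theta(G)|\le 5$ for $A_4$, $S_4$, $A_5$ should be justified by counting conjugacy classes (on which the class function $\theta$ is constant), not element orders, since e.g.\ the two classes of $5$-cycles in $A_5$ generally give two \emph{different} $\theta$-values; and in the Borel case the semisimple parts do not lie in one fixed torus, though since $\theta$ only depends on the characteristic polynomial your conclusion there stands.

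The genuine gap is in the ``only if'' direction, precisely the step you dismiss as purely mechanical. Every element of $\PSL_2(\FF_q)\subset\PGL_2(\FF_q)$ has a representative $\gamma\in\SL_2(\FF_q)$, so on $\PSL_2(\FF_q)$ one has $\theta=\tr(\gamma)^2$; since traces of $\SL_2(\FF_q)$ exhaust $\FF_q$ (your companion matrices), the mechanical computation actually yields $\theta(\PSL_2(\FF_q))=\{t^2:t\in\FF_q\}$, the set of squares of $\FF_q$. For $q$ even this is all of $\FF_q$, but for $q$ odd it has only $(q+1)/2$ elements, so the asserted surjectivity is false and no further checking of classes can rescue it: the statement can only hold verbatim in characteristic $2$, and for odd $q$ it must be read with ``the full set $\FF_q$'' replaced by ``the full set of squares $\{t^2:t\in\FF_q\}$'' (equivalently, one works with subgroups of $\PGL_2(\FF_q)$, where $\theta$ is genuinely surjective onto $\FF_q$). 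This is a point the paper itself glosses over --- note that in its application $\theta$ is only ever evaluated at elements of square determinant, i.e.\ in $\PSL_2$ --- but a correct write-up has to confront it: either prove the corrected statement, or restrict to $q$ a power of $2$. By asserting that the direct computation ``confirms that it is the full set $\FF_q$'', your draft papers over the one place where the claim as printed actually breaks.
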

Coefficients and characters of modular forms can be computed (see \cite{StComp}).
For a given small prime $p\nmid N\ell$, we can thus use \eqref{eq:charpolrho} to 
compute $\tr(\rho(\Frob_p^n))$ and $\det(\rho(\Frob_p^n))$ for any $n\geq 0$.
If $\det(\rho(\Frob_p^n))$ happens to be a square in $\FF_\lambda$, 
then we compute $\theta(\tilde\rho(\Frob_p^n))\in\FF_\lambda$, 
where $\theta$ is as in the above proposition. If, after having tried several 
small primes $p$, we have met every element of $\FF_\lambda$, then we know that 
the projective image of $\rho$ contains $\PSL_2(\FF_\lambda)$. 
The character $\varepsilon_f$ can now be used to decide between 
$\im(\tilde\rho)=\PSL_2(\FF_\lambda)$ and 
$\im(\tilde\rho)=\PGL_2(\FF_\lambda)$, as we have 
$\det\rho = \varepsilon_f\chi_\ell$, where $\chi_\ell$ denotes the mod $\ell$
cyclotomic character of $\GQ$.

Once we have found a modular form $f$ giving rise to a desired Galois group, we
let $X$ be a modular curve over which the modular form $f$ lives.  
Typically, $X$ can be taken to be $X_1(N)$, where $N$ is the level of $f$,
 but sometimes a quotient of 
$X_1(N)$ of smaller genus may work as well.  The representation $\rho$ can be 
described as the action of $\GQ$ on a certain Hecke-invariant subspace 
$V_{f,\lambda}$ of $\operatorname{Jac}(X)(\Qbar)[\ell]$
(see for instance \cite[Sections 3.2 and~3.3]{RiSt}).

Computing modular Galois representations is the subject of both 
\cite{TheBigBook} and \cite{Peter}. 
In \cite{BoComp}, which is part of \cite{TheBigBook},
the author describes in detail how a polynomial $P_{f,\lambda}$
can be computed in practice.  
We will now give a very brief overview of the methods described there.
The general strategy is to first 
compute the points
in $V_{f,\lambda}$ over $\CC$ to a high precision (sometimes a few thousands 
of decimals are required) and to use this to compute a real approximation 
$P'_{f,\lambda}$ of $P_{f,\lambda}$.  
The next step is to approximate $P'_{f,\lambda}$
by a polynomial $P''_{f,\lambda}\in\QQ[x]$ of relatively small height and 
to apply some heuristics to decide whether the used precision was high enough
to make $P''_{f,\lambda}=P_{f,\lambda}$ likely to be true.  If the heuristic
test are passed, then we compute the maximal order ${\cal O}_K$ 
of the number field $K$ defined by $P''_{f,\lambda}$ and search for elements 
$\alpha\in{\cal O}_K$ of small height with $\QQ(\alpha)=K$, using the 
lattice structure that ${\cal O}_K$ gets using all complex embeddings
$K\hookrightarrow\CC$.
The minimal polynomial $P'''_{f, \lambda}$ of $\alpha$ 
will then have small coefficients.  Let us emphasize that,
because of the numerical nature of the calculations, correctness of the 
polynomials is not guaranteed at this stage of the computation,
so verification methods as in 
Sections \ref{section:Galois} and \ref{section:modverify} are essential.

Several software packages were used to carry out the 
computations described there.  The author used a {\sc Sage} \cite{Sage} 
implementation for the numerical approximation and the computation of initial 
candidate polynomials $P''_{f,\lambda}$.  After this, a combination of computations 
in orders of number fields using {\sc Magma} \cite{Magma}
and the function {\tt polredabs} of {\sc Pari} \cite{Pari} was used to obtain 
polynomials $P'''_{f,\lambda}$ with coefficients of tractable size.

\section{Verifying the Galois groups}\label{section:Galois}
Nowadays, {\sc Magma} 
can compute the Galois group of a square-free polynomial 
$P\in \QQ[x]$ of arbitrary degree, thanks to an implementation of 
Claus Fieker and J\"urgen Kl\"uners.  
It represents its output as a permutation group acting on the roots
of $P$ in a suitably chosen $p$-adic field.  These roots are in turn
represented by sufficiently accurate $p$-adic approximations.  
The author fed the polynomials displayed in Section \ref{section:polynomials} to 
this implementation and indeed 
all their Galois groups could be rigorously computed without difficulties.

Let $P\in\QQ[x]$ be one of the polynomials from 
Section~\ref{section:polynomials}.
We have computed $\Gal(P)$ as permutation group of degree $\deg(P)$.
Let $G$ be the group that in Section \ref{section:polynomials} is claimed 
to be isomorphic to $\Gal(P)$.  
In all cases, $G$ is $\PSL(\FF)$ or $\PGL(\FF)$ acting on $\PP^1(\FF)$ for 
some finite field $\FF$.  We want to verify $\Gal(P)\cong G$.
Computing an explicit isomorphism between $\Gal(P)$ and $G$ can be an
extensive task in some of the cases and we can indeed do better.  

For any field $k$, the group $\PSL_2(k)$ is doubly transitive.
It is easy to verify that $\Gal(P)$
too is doubly transitive and that it has order equal to $\#G$.
The doubly transitive permutation groups
have been classified, see for instance \cite[Theorem~5.3]{Cam}.  The cited 
reference sums up results using the classification of finite simple groups, 
though prior to this, Charles Sims
had already performed an unpublished computation of 
the primitive permutation groups up to degree $50$ that does not rely
on this classification.
Consulting the classification of doubly transitive groups,
one can verify that 
$\Gal(P)$, given its degree $q+1$ and its order,  
contains $\PSL_2(\FF_q)$ as simple normal subgroup.  
We can already conclude from this information that each of the 
groups $\PSL_2(\FF_{25})$, $\PSL_2(\FF_{32})$, $\PSL_2(\FF_{49})$ 
and $\PGL_2(\FF_{27})$
is the unique doubly transitive permutation group of its degree and order.
In addition, of the $3$ doubly transitive permutation groups of degree $26$ 
and order $15600$, $\PGL_2(\FF_{25})$ is the only one with $27$ conjugacy classes.
These observations cover all cases occurring in Section \ref{section:polynomials}.
So we see that we can identify $G$ easily by computing some simple invariants 
that it has as a permutation group, 
without having to find explicit isomorphisms $G\cong\Gal(P)$.

A reader who has no access to a computer running {\sc Magma} may still wish to 
verify the Galois groups.  Let us describe briefly how this can be done. 
Let $P\in\QQ[x]$ be one of the polynomials given in Section 
\ref{section:polynomials} and let $\Omega(P)\subset\Qbar$ be the 
set of roots of $P$.  The double transitivity of $\Gal(P)$ can be 
verified as follows.  Compute the resolvent
$Q(x):=\prod_{(\alpha, \beta)\in\Omega(P)^2}(x-a\alpha-b\beta)\in\QQ[x]$ for 
small integers $a$ and $b$ such that the roots of $Q$ are distinct.
Then $\Gal(P)$ is doubly transitive if and only if $Q$ is irreducible.
The polynomial $Q$ can be computed symbolically, see \cite[Chapter~3]{Soi}.
Using the classification given in \cite[Theorem~5.3]{Cam} one can determine the 
lattice of doubly transitive permutation groups of given degree.  With this 
lattice at hand, one could apply \cite[Algorithm~6.1]{GeKl} to determine 
the Galois group.

\section{Further verifications}\label{section:modverify}
Serre's conjectures \cite[(3.2.3$_?$) and (3.2.4$_?$)]{SeConj} state that 
every absolutely irreducible odd two-dimensional representation 
of $\GQ$ over a finite field is associated to a cuspidal newform in a sense 
similar to~\eqref{eq:charpolrho}.  
A Galois representation is called \emph{odd} if the 
determinant of a complex conjugation equals $-1$. 
Serre's conjectures have been proved by subsequent work of 
many people, among whom Khare, Wintenberger, and Kisin did the final steps 
(see \cite[Theorems 1.2 and~9.1]{KhaWi} and \cite[Theorem~0.1]{Kisin}).
In the present section, we will point out 
how Serre's conjectures can be used to determine for each polynomial 
in Section \ref{section:polynomials} a modular form to which it is attached.

\subsection{Number field attached to projective representation}\label{subsec:nf}
Let $q$ be a prime power and let the permutation group $G$ be either 
$\PSL_2(\FF_q)$ or $\PGL_2(\FF_q)$ acting on 
$\PP^1(\FF_q)$ in the standard way.

A projective representation $\rho\colon\GQ\twoheadrightarrow G\subset\PGL_2(\FF_q)$ 
makes $\GQ$ act transitively on $\PP^1(\FF_q)$. Via Galois theory this 
action corresponds to a number field $K$ of degree $q+1$ over $\QQ$ such 
that the normal closure of $K/\QQ$ in $\Qbar$ has Galois group $G$.

On the other hand, from Dickson's classification of the subgroups of 
$\PSL_2(\FF_q)$ \cite[Section~III.6]{Suz} one can derive that any subgroup of $G$ 
of index $q+1$ is conjugate to the subgroup represented by the upper triangular 
matrices and therefore that any transitive permutation action of $G$ on a set of 
size $q+1$ is isomorphic to the standard action on $\PP^1(\FF_q)$.  

Let now $K$ be a number field of degree $q+1$ over $\QQ$ such that its 
normal closure $L$ over $\QQ$ has Galois group isomorphic to $G$.  
Choosing an isomorphism
$\Gal(L/\QQ)\cong G$ defines a
projective representation $\tilde\rho_K\colon\GQ\to \PGL_2(\FF_q)$
with image $G$.  The number
field $K$ corresponds to the stabilizer of a point of $\PP^1(\FF_q)$.
The automorphism group of $G$ is $\PGL_2(\FF_q)\rtimes\Aut(\FF_q)$, where 
$\PGL_2(\FF_q)$ acts by conjugation and $\Aut(\FF_q)$ acts on matrix entries.
The representation $\tilde\rho_K$ is unique up to an automorphism of $G$
and is thus well defined up to a choice of coordinates of $\PP^1(\FF_q)$ and
an automorphism of $\FF_q$.

\subsection{Level and weight}
If $\ell$ is a prime number, then an irreducible representation 
$\rho\colon\GQ\to\GL_2(\Flbar)$ has a \emph{Serre level} and a \emph{Serre weight}.
The Serre level $N(\rho)$ is equal to the prime-to-$\ell$ part of the Artin 
conductor of $\rho$ (see \cite[Subsection~1.2]{SeConj}).
If $\rho$ is at most tamely ramified at a prime $p\not=\ell$, then 
$N(\rho) = 2 - \dim V^{I_p}$, where $V$ denotes the $2$-dimensional 
$\Flbar$-vector space with $\GQ$-action via $\rho$.  The Serre
weight $k(\rho)$ is described in terms of $\rho|_{I_\ell}$; for the definition, 
which depends on distinguishing several cases,
we refer to \cite[Section~4]{EdixWt}.
Serre's \emph{strong} conjecture states that $\rho$, if it is odd, 
is associated to a newform of level $N(\rho)$ and weight at most $k(\rho)$.  
There's a small caveat to be made here: 
in case $\ell=2$ and $k(\rho)=1$,
we may sometimes have to redefine $k(\rho)$ as $2$ in order
to make Serre's strong conjecture fully proven.

Let $\ell$ be a prime and let $\tilde\rho\colon\GQ\to\PGL_2(\Flbar)$ 
be an irreducible projective representation.  The various liftings of 
$\tilde\rho$ will have various Serre levels and Serre weights.  In view of 
the following theorem by Tate there exists a lifting $\rho$ of $\tilde\rho$ 
such that $k(\rho)$ and $v_p(N(\rho))$ for all $p$ are minimized simultaneously.  
\begin{thm}[{Tate, see \cite[Section~6]{SeWt1}}]
Let $k$ be an algebraically closed field and let 
$\tilde\rho\colon\GQ\to\PGL_2(k)$ be a projective representation.  
Then there exists a lifting $\rho\colon\GQ\to\GL_2(k)$ of $\tilde\rho$.  
If, furthermore, for each prime $p$ a lifting $\rho_p\colon\GQp\to\GL_2(k)$ of
$\tilde\rho|_{\GQp}$ is given with all but finitely many $\rho_p$ unramified, 
then a lifting $\rho$ of $\tilde\rho$ exists with $\rho|_{I_p}\cong\rho_p|_{I_p}$ 
for each $p$.
\end{thm}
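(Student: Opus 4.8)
The plan is to run the usual obstruction theory for lifting along a central extension and to feed in one genuinely arithmetic input, due to Tate. Throughout, $\tilde\rho$ has finite image --- in the applications $k=\Flbar$ carries the discrete topology --- so it factors through $\Gal(K/\QQ)$ for a finite Galois extension $K/\QQ$ (in general one argues with continuous cohomology throughout). Since $k^*$ is the centre of $\GL_2(k)$, the central extension
\[
1\longrightarrow k^*\longrightarrow\GL_2(k)\longrightarrow\PGL_2(k)\longrightarrow 1
\]
has trivial $\GQ$-action on $k^*$, and pulling it back along $\tilde\rho$ yields an extension of $\GQ$ by $k^*$ whose class $c(\tilde\rho)\in H^2(\GQ,k^*)$ vanishes if and only if $\tilde\rho$ admits a lifting to $\GL_2(k)$. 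So the whole problem is to show that $c(\tilde\rho)=0$, and then, for the refinement, to adjust a lift by a character.

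The first step is to cut down the coefficient module. Because $k$ is algebraically closed, $k^*$ is a divisible abelian group, so its torsion subgroup $\mu$ --- the group of roots of unity of order prime to $\operatorname{char}k$ --- is a direct summand: $k^*\cong\mu\oplus V$ with $V$ a $\QQ$-vector space. As $H^i(\GQ,\QQ)=0$ for $i>0$ ($\QQ$ being uniquely divisible), we get $H^2(\GQ,V)=0$, hence $c(\tilde\rho)$ already lies in $H^2(\GQ,\mu)=\varinjlim_n H^2(\GQ,\ZZ/n)$, the colimit over $n$ prime to $\operatorname{char}k$ with trivial action. Next, $c(\tilde\rho)$ is \emph{everywhere locally trivial}: for each place $v$ the obstruction to lifting $\tilde\rho|_{G_{\QQ_v}}$ is the corresponding class in $H^2(G_{\QQ_v},\QQ/\ZZ)$, and this group vanishes --- for finite $v$ because the strict cohomological dimension of $G_{\QQ_v}$ equals $2$ (so $H^2(G_{\QQ_v},\QQ/\ZZ)\cong H^3(G_{\QQ_v},\ZZ)=0$), and for $v$ real because $\QQ/\ZZ$ is divisible --- so a local lifting always exists (and where local liftings are prescribed, they witness the same thing). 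The arithmetic core of the proof is now the assertion, due to Tate, that a class in $H^2(\GQ,\QQ/\ZZ)$ that is trivial over every $\QQ_v$ is already trivial; by Poitou--Tate duality this is equivalent to the vanishing of $\varprojlim_n\ker\!\bigl(H^1(\GQ,\mu_n)\to\prod_v H^1(G_{\QQ_v},\mu_n)\bigr)$, which one deduces from the Grunwald--Wang theorem. Granting this, $c(\tilde\rho)=0$ and a lifting $\rho$ exists, which is the first assertion.

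For the refinement, suppose local liftings $\rho_p$ of $\tilde\rho|_{\GQp}$ are given, with $\rho_p$ unramified for all $p$ outside a finite set $S$. By the part just proved, choose some lifting $\rho_0\colon\GQ\to\GL_2(k)$; it has finite image, hence is ramified at only finitely many primes, so after enlarging $S$ we may assume $\rho_0$ is unramified outside $S$ as well. For each prime $p$, the two representations $\rho_p$ and $\rho_0|_{\GQp}$ lift the \emph{same} projective representation $\tilde\rho|_{\GQp}$, so the map $g\mapsto\rho_p(g)\rho_0(g)^{-1}$ takes scalar values and defines a character $\eta_p\colon\GQp\to k^*$ with $\rho_p=\eta_p\otimes\rho_0|_{\GQp}$, unramified for $p\notin S$. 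It therefore suffices to produce a global character $\chi\colon\GQ\to k^*$ with $\chi|_{I_p}=\eta_p|_{I_p}$ for every $p$, since then $\rho:=\chi\otimes\rho_0$ is a lifting of $\tilde\rho$ with $\rho|_{I_p}\cong\rho_p|_{I_p}$ for all $p$. But this is immediate from the Kronecker--Weber theorem: it identifies $\GQ^{\mathrm{ab}}$ with $\widehat{\ZZ}^*=\prod_p\ZZ_p^*$ so that the $\ZZ_p^*$-factor is the image of $I_p$, and each $\eta_p|_{I_p}$ does factor through this $\ZZ_p^*$ because $\eta_p$ is a character of $\GQp$; so one simply takes $\chi$ to be $\eta_p|_{I_p}$ on the $\ZZ_p^*$-factor for $p\in S$ and trivial on the remaining factors.

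The one non-formal ingredient is Tate's vanishing of the everywhere-locally-trivial part of $H^2(\GQ,\QQ/\ZZ)$, for which the Grunwald--Wang theorem --- in particular its delicate special case at the prime $2$ --- is essential; note that $\GQ$ is not a projective profinite group (it has elements of order $2$), so no purely group-theoretic splitting is available and this input cannot be avoided. Everything else --- the coefficient reduction, the existence of local liftings over $\QQ_v$, the comparison of two linear lifts by a character, and the construction of the global twist via Kronecker--Weber --- is routine, as is checking continuity of the resulting $\rho$. I expect the cohomological statement to be the only place where genuine care is required.
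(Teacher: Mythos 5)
The paper gives no proof of this theorem at all --- it is quoted from Serre's write-up of Tate's argument (the cited Section~6 of \cite{SeWt1}) --- and your sketch is essentially that standard proof and is correct: the obstruction class in $H^2(\GQ,k^*)$, reduction to the torsion subgroup of $k^*$ (a copy of the prime-to-characteristic part of $\QQ/\ZZ$ with trivial action), vanishing of the local $H^2$'s, Tate's local--global vanishing for constant coefficients, and finally adjustment of one global lift by a character constructed via class field theory / Kronecker--Weber so as to match the prescribed restrictions to inertia. One minor point of emphasis: over $\QQ$ with \emph{all} places included, the kernel of $H^1(\GQ,\mu_n)\to\prod_v H^1(G_{\QQ_v},\mu_n)$ vanishes for elementary reasons (a positive rational number all of whose valuations are divisible by $n$ is an $n$-th power), so the delicate special case of Grunwald--Wang at the prime $2$ is not actually needed for the base field $\QQ$, though it does become relevant if one wants the analogous statement over general number fields.
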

\begin{defn}
Let $\ell$ be a prime and let $\tilde\rho\colon\GQ\to\PGL_2(\Flbar)$
be an irreducible projective representation.  
The \emph{Serre level} $N(\tilde\rho)$ and \emph{Serre weight} $k(\tilde\rho)$ of 
$\tilde\rho$ are defined to be minimum of the Serre levels, resp.\ weights, 
of all the 
liftings $\rho\colon\GQ\to\GL_2(\Flbar)$ of $\tilde\rho$. 
\end{defn}
From the definition of Serre weight given in \cite[Section~4]{EdixWt} one 
can immediately see that in general we have
$1\leq k(\tilde\rho)\leq \ell+1$.

The following proposition shows how $N(\tilde\rho)$
can be related to the ramification behaviour of the number field $K$ attached to it
in the case that $\tilde\rho$ is tamely ramified outside $\ell$.
\begin{prop}\label{prop:level}
Let $\ell$ be a prime number and let $\FF\subset \Flbar$ be a 
finite field.  Let $\tilde\rho\colon\GQ\to\PGL_2(\FF)$ be a
projective representation with $\im(\tilde\rho)\supset\PSL_2(\FF)$ and let $K$ 
be its attached number field, as described in Subsection \ref{subsec:nf}.
Let $p\not=\ell$ be a prime above which $K/\QQ$ is at most tamely ramified.  
Then the valuation $v_p(N(\tilde\rho))$ is at most $2$ and 
can be expressed as follows:
\[
v_p(N(\tilde\rho)) = 
\left\{
\begin{array}{ll}
0 & \mbox{if $K$ is unramified at $p$},\cr
1 & \mbox{if $K$ is ramified at $p$ but also has an unramified prime above $p$},\cr
2 & \mbox{if $K$ has no unramified primes above $p$}.
\end{array}
\right.
\]
\end{prop}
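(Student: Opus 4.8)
The plan is to replace $\tilde\rho$ by an optimally chosen linear lift and to read off its conductor exponent at $p$ from the orbit structure of $D:=\tilde\rho(\GQp)$ on $\PP^1(\FF)$. First I would set up the dictionary: writing $G=\im(\tilde\rho)$ and $I=\tilde\rho(I_p)$, the primes of $K$ above $p$ correspond to the $D$-orbits on $\PP^1(\FF)\cong G/(\text{point stabilizer})$, and the ramification index of the prime attached to the orbit of $x$ is the length of the $I$-orbit of $x$; since $I\trianglelefteq D$, the field $K$ has an unramified prime above $p$ exactly when $I$ fixes a point of $\PP^1(\FF)$. Because $G\subseteq\PGL_2(\FF)$ acts faithfully on $\PP^1(\FF)$, the hypothesis that $K/\QQ$ be tame at $p$ forces the image $\tilde\rho(P_p)$ of wild inertia — a $p$-group that by tameness must fix every point of $\PP^1(\FF)$ — to be trivial; hence $\tilde\rho$ is tamely ramified at $p$ and $I=\langle\bar\sigma\rangle$ is cyclic of order prime to $p$. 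By Dickson's classification $\bar\sigma$ is trivial, unipotent, split semisimple, or non-split semisimple, and it has an $\FF$-rational fixed point on $\PP^1(\FF)$ in exactly the first three cases.

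Next I would reduce to a local lifting problem. By Tate's theorem a lift of $\tilde\rho$ may be prescribed on inertia prime by prime (with the unramified lift at all but finitely many places), so $v_p(N(\tilde\rho))=\min_{\rho_p}v_p(\operatorname{cond}\rho_p)$ over local lifts $\rho_p\colon\GQp\to\GL_2(\Flbar)$ of $\tilde\rho|_{\GQp}$. A wildly ramified local lift has conductor exponent at least $2$ at $p$, so the minimum is attained by a tame one; a tame lift factors through the tame quotient $\langle\tau,\phi\mid\phi\tau\phi^{-1}=\tau^p\rangle$ of $\GQp$, is given by matrices $T=\rho_p(\tau)$ and $F=\rho_p(\phi)$ with $FTF^{-1}=T^p$ (where $T$ lifts $\bar\sigma$ and $F$ lifts $\tilde\rho(\phi)$), and has conductor exponent $2-\dim\ker(T-1)$ at $p$. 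Thus $v_p(N(\tilde\rho))=2-m$, where $m$ is the largest value of $\dim\ker(T-1)$ over admissible pairs $(T,F)$, and in particular $v_p(N(\tilde\rho))\leq 2$. If $K$ is unramified at $p$ then $\bar\sigma=1$, the unramified lift is admissible, $m=2$, and $v_p(N(\tilde\rho))=0$; if $K$ is ramified at $p$ then $\bar\sigma\neq1$, no admissible $T$ is scalar, so $m\leq1$ and $v_p(N(\tilde\rho))\in\{1,2\}$.

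It then remains to show that $m=1$ exactly when $\bar\sigma$ has an $\FF$-rational fixed point. For $\bar\sigma$ unipotent one takes $T$ unipotent; for $\bar\sigma$ split one works in an eigenbasis, writes $T=\operatorname{diag}(c,c\delta)$ with $\delta$ the eigenvalue ratio of $\bar\sigma$, and selects the scalar $c$ making $1$ an eigenvalue of $T$ while respecting $FTF^{-1}=T^p$, which — since $\tilde\rho(\phi)\bar\sigma\tilde\rho(\phi)^{-1}=\bar\sigma^p$ — amounts to a single scalar equation on $c$ that one checks is compatible with that choice. For $\bar\sigma$ non-split semisimple one argues conversely that no admissible $T$ can have $1$ as an eigenvalue, by playing the eigenvalue ratio of $T$ (equal to that of $\bar\sigma$, hence not $\FF$-rational) against the relation $FTF^{-1}=T^p$. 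I expect this last part — converting the presence or absence of an $\FF$-rational fixed point of $\bar\sigma$ on $\PP^1(\FF)$ into whether a linear lift can be normalised to have eigenvalue $1$, uniformly in the split and unipotent cases and negatively in the non-split case — to be the main obstacle of the proof.
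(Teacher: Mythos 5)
Your overall route is the one the paper takes: observe that tameness of $K$ at $p$ forces $\tilde\rho|_{\GQp}$ to be tame, use Tate's theorem to identify $v_p(N(\tilde\rho))$ with the minimum of the conductor exponent $2-\dim\ker(T-1)$ over tame local lifts $(T,F)$ satisfying $FTF^{-1}=T^p$, and then do a case analysis on the shape of the inertia image (trivial, unipotent, split semisimple, non-split semisimple). Your dictionary between primes of $K$ above $p$ and orbits of the decomposition image, and your orbit-length argument killing wild inertia, are correct and in fact cleaner than what the paper says explicitly.

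The gap is precisely at the step you yourself flag as the main obstacle, and it is not just a deferred verification: the claimed choice of $c$ does not exist in the split semisimple case when $\tilde\rho(\phi)$ \emph{swaps} the two eigenlines of $\bar\sigma$, i.e.\ is anti-diagonal in the chosen eigenbasis (the projective relation then only requires $\delta^{p+1}=1$, which is perfectly possible). Indeed, with $T=\mathrm{diag}(c,c\delta)$ and $F$ anti-diagonal, the relation $FTF^{-1}=T^p$ forces $c^{p-1}=\delta$, so the eigenvalues of $T$ are $c$ and $c^p$; demanding that $1$ be an eigenvalue then forces $\delta=1$, a contradiction. So in this configuration no tame lift has conductor exponent $1$, and since wild lifts have exponent at least $2$, \emph{every} lift has exponent $2$ (concretely, such local projective representations are the projectivizations of $\mathrm{Ind}_{\QQ_{p^2}}^{\QQ_p}\chi$ with $\chi$ tamely ramified and $\chi\neq\chi^{\Frob}$ on inertia, and no twist of such a representation has exponent $<2$). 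Note that $K$ nevertheless has an unramified prime above $p$ in this situation, but of residue degree $2$: the two $I$-fixed points form a single Frobenius orbit. So to finish along these lines you must either rule this configuration out or prove the sharper dichotomy ``$v_p=1$ iff $K$ has an unramified prime of residue degree $1$ above $p$'', i.e.\ iff $\tilde\rho(\GQp)$, not just $\tilde\rho(I_p)$, fixes a point of $\PP^1(\FF)$ --- which is what the lifting analysis actually yields, and which is the form borne out by the table in Section~\ref{section:polynomials} (every entry with $v_p(N)=1$ has a factor $1^1$). You are in good company: the paper's own split semisimple case takes ``any lifting of $\tilde\rho(\sigma)$'' together with $\rho(\tau)=\mathrm{diag}(\alpha,1)$, but the relation \eqref{tameram} is then satisfied only when $\tilde\rho(\sigma)$ is diagonal, so the anti-diagonal sub-case is passed over there as well; your write-up should treat it explicitly rather than asserting compatibility.
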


\begin{proof}
The local representation $\tilde\rho_p:=\tilde\rho|_{\GQp}$ 
factors through the tame quotient $G$ of $\GQp$, which is topologically 
generated by two elements $\sigma$ and $\tau$, 
where $\tau$ generates the inertia group $I_p$ and $\sigma$ corresponds to 
a Frobenius.  They satisfy the relation
\begin{equation}\label{tameram}
\sigma\tau\sigma^{-1} = \tau^p.
\end{equation}
Giving a tamely ramified lifting $\rho\colon\GQp\to\GL_2(\Flbar)$ of 
$\tilde\rho$ is equivalent to giving liftings of 
$\tilde\rho(\sigma)$ and $\tilde\rho(\tau)$ 
that are compatible with \eqref{tameram}.
The element $\tilde\rho(\tau)$ of $\PGL_2(\FF)$ can have four possible shapes: 
it can be trivial, 
the reduction of a non-trivial unipotent element of $\GL_2(\FF)$, 
the reduction of a non-scalar split semi-simple element of $\GL_2(\FF)$,
or the reduction of a non-split semi-simple element of $\GL_2(\FF)$.

If $\tilde\rho(\tau)$ is trivial then $K$ is unramified at $p$.  
We can choose an unramified lift
$\rho$ of $\tilde\rho$ by taking $\rho(\tau)$ to be trivial and $\rho(\sigma)$ 
any lifting of $\tilde\rho(\sigma)$.  In that case we have $v_p(N(\rho))=0$.

If $\tilde\rho(\tau)$ is represented by a 
non-trivial unipotent matrix, then after conjugation we may assume 
that $\tilde\rho(\tau)$ is 
represented by $\mat{1}{1}{0}{1}$.  
From \eqref{tameram} one can derive that $\tilde\rho(\sigma)$ is represented
by a matrix of the form $\mat{p}{b}{0}{1}$.  
Putting $\rho(\sigma)=\mat{p}{b}{0}{1}$ and $\rho(\tau)=\mat{1}{1}{0}{1}$ gives
a lifting of conductor-exponent $1$ at $p$, which is minimal as $\tilde\rho$ 
is ramified.
The action of $\tilde\rho_p$ on $\PP^1(\FF)$ has exactly 1 orbit of length~1. 
This corresponds to 1 unramified prime of degree 1 above $p$ in $K$.  
 
In the split semi-simple case, 
after conjugation, we assume that $\tilde\rho(\tau)$ is represented by 
$\mat{\alpha}{0}{0}{1}$.
Using \eqref{tameram}, one can readily verify that $\tilde\rho(\sigma)$ 
is represented 
by a diagonal or anti-diagonal matrix.
Taking any lifting of $\tilde\rho(\sigma)$ and
$\rho(\tau)= \mat{\alpha}{0}{0}{1}$ yields $v_p(N(\rho))=1$.
The action of $\tilde\rho_p(\tau)$ on $\PP^1(\FF)$ does have fixed points,
which means that $K$ has unramified primes above $p$.  

We are left with the non-split semi-simple case.  
As a wildly ramified lifting of
$\tilde\rho_p$ has conductor-exponent at least $2$, we concentrate on searching 
for tamely ramified liftings.  
After a conjugation over $\Flbar$ we may suppose that $\tilde\rho(\tau)$
is represented by a matrix
$M=\mat{\lambda}{0}{0}{\lambda^p}$ with $\lambda$ quadratic over $\FF$.  
We assume $\tr M\not = 0$, as $\tr M=0$ is
included in the split semi-simple case.
From \eqref{tameram} it now follows that
$\tilde\rho(\sigma)$ is represented by an anti-diagonal matrix.  
Hence, any lifting $\rho(\tau)$ has to be a diagonal matrix that is conjugate
but not equal to its $p$-th power.  This is only possible if $\rho(\tau)$ is 
in $\FF^\times M$.  Any choice of $\rho(\tau)$ in this set and any lifting
of $\tilde\rho(\sigma)$ defines a tamely ramified lifting of $\tilde\rho_p$ 
of conductor-exponent $2$.
In this case, the inertia group has no fixed points in $\PP^1(\FF)$ so $K$ has 
no unramified primes above $p$.
\end{proof}
It should certainly be possible to generalize the above proposition
to representations that are wildly ramified at some place outside $\ell$ 
using, for instance, the results from \cite[Subsections 5.1 and~5.2]{KimVer}.
However, this may become somewhat elaborate and does not apply to 
the polynomials in Section \ref{section:polynomials}, so we will not do this here.

For the Serre weight we have the following proposition:
\begin{prop}\label{prop:weight}
Let $q$ be a power of a prime $\ell$ and let 
$\tilde\rho\colon\GQ\to\PGL_2(\FF_q)$ be a projective representation with 
$\im(\tilde\rho)\supset\PSL_2(\FF_q)$.  Let $K$ be the number field attached  
to $\tilde\rho$, as described in Subsection \ref{subsec:nf}.  
If $m$ is a positive integer such that $K$ has a prime of wild ramification 
index $\ell^m$ above $\ell$, then we have
\[
k(\tilde\rho) = 
1 + \left\lceil
\frac{(\ell-1)\ell^m}{(\ell^m-1)q}
\cdot\big(
v_\ell(\Disc(K/\QQ)) - q + 1
\big)\right\rceil.
\]
\end{prop}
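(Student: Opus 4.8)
The plan is to treat this as a purely local question at $\ell$. Both sides of the claimed identity are local invariants of $\tilde\rho$ there: by Tate's theorem above the minimum defining $k(\tilde\rho)$ is realised by a single lifting of $\tilde\rho$ whose restriction to $\GQl$ is optimal, so $k(\tilde\rho)$ depends only on $\tilde\rho|_{\GQl}$; and by the conductor--discriminant formula $v_\ell(\Disc(K/\QQ))$ is the exponent of $\ell$ in the Artin conductor of the permutation representation of $\GQ$ on $\PP^1(\FF_q)$ induced by $\tilde\rho$, hence also depends only on $\tilde\rho|_{\GQl}$ together with the ramification filtration at $\ell$. So I would first pin down the shape of $\tilde\rho|_{\GQl}$, then compute $v_\ell(\Disc(K/\QQ))$ from the resulting ramification data, then compute $k(\tilde\rho)$ from the same data via the weight recipe of \cite[\S4]{EdixWt}, and finally check by elementary algebra that the two expressions fit together.

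For the structure of $\tilde\rho|_{\GQl}$: the image $W:=\tilde\rho(P_\ell)$ of the wild inertia $P_\ell\subseteq I_\ell$ is a non-trivial $\ell$-subgroup of $\PGL_2(\FF_q)$, hence --- after conjugating --- an $\FF_\ell$-subspace of the unipotent radical $U\cong(\FF_q,+)$ of some Borel $B_0$. On $\PP^1(\FF_q)$ the subgroup $W$ fixes the point of $B_0$ and has $q/\lvert W\rvert$ other orbits, each of length $\lvert W\rvert$, so the wildly ramified primes of $K$ above $\ell$ are exactly those attached to these long orbits, all with wild ramification index $\lvert W\rvert$; thus $\lvert W\rvert=\ell^m$ and $\dim_{\FF_\ell}W=m$. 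Since $\tilde\rho(\GQl)$ normalises $W$ and, for $W\neq 1$, $N_{\PGL_2(\FF_q)}(W)\subseteq B_0$ (a non-trivial element of $U\cap gUg^{-1}$ would be unipotent and fix the two distinct points of $B_0$ and $gB_0$), we get $\tilde\rho(\GQl)\subseteq B_0$ and $\tilde\rho(I_\ell)=W\rtimes\tilde C$ with $\tilde C$ cyclic of order $e_0$ prime to $\ell$. For any lifting $\rho\colon\GQ\to\GL_2(\Flbar)$ the unique $\rho(P_\ell)$-stable line is $\GQl$-stable (as $\GQl$ normalises $P_\ell$), so $\rho|_{\GQl}$ is upper triangular with diagonal characters $\eta_1,\eta_2$; each $\eta_i|_{I_\ell}$ extends to $\GQl$, hence is fixed by the Frobenius action on characters of $I_\ell$, so is niveau~$1$, a power of $\chi:=\chi_\ell|_{I_\ell}$. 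In particular $e_0$, the order of $\eta_1\eta_2^{-1}|_{I_\ell}$, divides $\ell-1$, and $\tilde C$ acts on $W$ by $\FF_\ell$-scalars. Writing $G_i$ for the lower-numbering ramification groups of $\GQl$, I expect $\tilde\rho(G_0)=W\rtimes\tilde C$, $\tilde\rho(G_1)=W$, and a single break: $\tilde\rho(G_i)=W$ for $1\le i\le j$ and $\tilde\rho(G_i)=1$ for $i>j$, the off-diagonal cocycle lying in an $H^1(\GQl,\FF_\ell(\chi^t))$ whose ramified part is concentrated at one break.

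With this in hand the discriminant is
\[
v_\ell(\Disc(K/\QQ))=\sum_{i\ge 0}\frac{\lvert\tilde\rho(G_i)\rvert}{\lvert\tilde\rho(G_0)\rvert}\bigl((q+1)-o_i\bigr),
\]
$o_i$ being the number of $\tilde\rho(G_i)$-orbits on $\PP^1(\FF_q)$; a Burnside count gives $o_i=1+q/\ell^m$ for $1\le i\le j$ and $o_0=2+(q-\ell^m)/(e_0\ell^m)$, so
\[
v_\ell(\Disc(K/\QQ))-q+1=\frac{jq(\ell^m-1)-(q-\ell^m)}{e_0\ell^m}.
\]
For the weight, any two liftings of $\tilde\rho$ differ by an abelian twist, niveau~$1$ at $\ell$, so $k(\tilde\rho)=\min_c k(\rho\otimes\chi_\ell^c)$, each term being read off from the upper-triangular form of $(\rho\otimes\chi_\ell^c)|_{\GQl}$. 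Running the reducible, wildly ramified branch of Edixhoven's recipe, using $e_0\mid\ell-1$ and the break $j$ --- equivalently, via a Fontaine--Laffaille-type bound on the wild ramification of a lift of weight $<k$ together with an explicit construction of a lift of weight $1+(\ell-1)j/e_0$ --- I would obtain $k(\tilde\rho)=1+(\ell-1)j/e_0$. Finally, multiplying the last display by $(\ell-1)\ell^m/((\ell^m-1)q)$ gives $(\ell-1)j/e_0-(\ell-1)(q-\ell^m)/((\ell^m-1)qe_0)$; the first term is an integer since $e_0\mid\ell-1$, and the subtracted term lies in $[0,1)$ (it is $0$ exactly when $q=\ell^m$), so the ceiling of the whole expression equals $(\ell-1)j/e_0=k(\tilde\rho)-1$, which is the asserted identity.

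The hard part will be the weight computation: one must thread Edixhoven's case-by-case recipe, recognise which case $(\rho\otimes\chi_\ell^c)|_{\GQl}$ falls into for the optimal $c$, match the break $j$ appearing in the conductor calculation with the inertial invariant that controls the weight, and handle the boundary situations (peu- versus très-ramifiée classes, and the unipotent-image case, which forces $e_0=1$ and $j=1$). The single-break claim above likewise rests on the precise local structure of $H^1(\GQl,\FF_\ell(\chi^t))$, and for $m\geq 2$ the cocycle can in principle acquire a second ramification break, which would need separate bookkeeping --- although in the cases relevant here only $m=1$ occurs. Once these local inputs are secured, the discriminant computation and the final comparison are routine.
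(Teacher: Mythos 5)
Your overall strategy is sound and in fact runs parallel to the paper's: the paper also reduces to the inertia action at $\ell$ on $\PP^1(\FF_q)$, identifies the orbit pattern (one fixed point, one orbit of size $\ell^m$, the rest of full length), writes the $\ell$-part of $\Disc(K/\QQ)$ via the resulting \'etale algebra $\Qlunr\times L'\times L^{(\cdot)}$, and finishes by elementary algebra plus a ceiling argument identical in spirit to your final paragraph. Your discriminant bookkeeping (the formula $v_\ell(\Disc(K/\QQ))-q+1=\bigl(jq(\ell^m-1)-(q-\ell^m)\bigr)/(e_0\ell^m)$) and the observation that the correction term lies in $[0,1)$ are correct, and your asserted relation $k(\tilde\rho)=1+(\ell-1)j/e_0$ is numerically consistent with the paper's source of truth.

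The genuine gap is that this last relation --- the bridge between the Serre weight and the wild ramification data --- is exactly the hard content of the proposition, and you do not prove it: you say you ``would obtain'' it from Edixhoven's recipe or a Fontaine--Laffaille-type argument, and you likewise only posit the single-break structure of the filtration on $W$. The paper does not rederive this either; it imports it as a citable input, namely Moon--Taguchi's Theorem 3 (\cite{MoTa}), which gives $v_\ell({\cal D}(L/\Qlunr))$ exactly in terms of $k$ and $\ell^m$ for a lifting with $\rho|_{I_\ell}$ of the shape $\mat{\chi_\ell^{k-1}}{*}{0}{1}$, in two cases ($k\le\ell$ and $k=\ell+1$, the latter handling the tr\`es ramifi\'ee boundary you worry about). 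Without that input, or a proof of your break--weight identity including the constraint that the cocycle extends to $\GQl$ (which is what pins the break down), the argument is incomplete. Moreover, your escape hatch ``only $m=1$ occurs in the cases relevant here'' is false for this paper and for the statement as written: the proposition is for arbitrary $m\ge 1$, and most fields in Section 5 have $m\ge 2$ (e.g.\ the $\PSL_2(\FF_{32})$ field has a prime above $2$ with $e=2^5$, the $\PGL_2(\FF_{27})$ fields have $e=3^3$ at $3$, the second $\PSL_2(\FF_{25})$ and both $\PSL_2(\FF_{49})$ fields have $e=\ell^2$). So the $m\ge 2$ bookkeeping you defer cannot be deferred; it is precisely what the appeal to \cite[Theorem~3]{MoTa} (or an equivalent computation of the wild filtration for Frobenius-equivariant cocycles) must supply. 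A minor additional point: the discriminant-by-orbits formula should be phrased with the lower-numbering filtration of the finite Galois group of the field cut out (lower numbering does not pass to quotients of $\GQl$), though this is cosmetic compared with the missing weight computation.
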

\begin{proof}
For $q=\ell$, the author gave a similar formula in 
\cite[Corollary 7.2.8]{BoLvl1}; we will generalize the derivation given there.

Put $k=k(\tilde\rho)$ and $d=\gcd(k-1, \ell-1)$ and consider a lifting
$\rho$ of $\tilde\rho$ of weight $k$. 
From the definition of weight it follows that $\rho|_{I_\ell}$ is isomorphic
to a non-split representation of the form
$\mat{\chi_\ell^{k-1}}{*}{0}{1}$.  We see that
$\ker(\tilde\rho|_{I_\ell}) = \ker(\rho|_{I_\ell})$.
Let $L/\Qlunr$ be the fixed field of $\ker(\rho|_{I_\ell})$.
Denote by $P<\FF_q$ be the subgroup that is the image of the upper right entry.
Then the action of $I_\ell$ on $\PP^1(\FF_q)$ has orbits $\{\vect{1}{0}\}$,
$\{\vect{x}{0}\colon x \in P\}$, and apart from these two, only orbits
of full length~$\frac{\ell-1}{d}\#P$.  It follows that $\#P=\ell^m$ and 
that the \'etale algebra over $\Qlunr$ attached to the Galois action
of $I_\ell$ on $\PP^1(\FF_q)$ is isomorphic to
\[
\Qlunr \times L' \times L^{d(q-\ell^m)/((\ell-1)\ell^m)},
\]
where $L'$ is a subfield of $L$ of index $(\ell-1)/d$.
If ${\cal D}$ denotes the different, then 
\[
v_\ell({\cal D}(L/L')) = \frac{\frac{\ell-1}{d} - 1}{\frac{\ell-1}{d}\ell^m}
= \frac{\ell-1-d}{(\ell-1)\ell^m}.
\]
From this we obtain
\begin{eqnarray}\label{discKQ}
v_\ell(\Disc(K/\QQ)) &=&
\ell^m(v_\ell({\cal D}(L/\Qlunr)) - v_\ell({\cal D}(L/L')))
 + (q - \ell^m)v_\ell({\cal D}(L/\Qlunr))\cr
 &=& q\cdot v_\ell({\cal D}(L/\Qlunr)) - \frac{\ell-1-d}{\ell-1}
\end{eqnarray}

Now we invoke \cite[Theorem 3]{MoTa}, which expresses the different of $L/K$ 
in terms of $k$.  If $k\leq \ell$, this theorem says 
$
v_\ell({\cal D}(L/\Qlunr))
 = 
1 + \frac{k-1}{\ell-1} - \frac{k-1-d}{(\ell-1)\ell^m}.
$
Plugging this into \eqref{discKQ} and rewriting yields:
\[
k - 1 \,=\,
\frac{(\ell-1)\ell^m}{(\ell^m-1)q}
\cdot\big(
v_\ell(\Disc(K/\QQ)) - q + 1
\big)
\,+\, \frac{d(q-\ell^m)}{(\ell^m-1)q}.
\]
As we always have $1 \leq d \leq \ell-1$ and $\ell\leq\ell^m\leq q$,
this implies the formula in the proposition.

In the case $k=\ell + 1$, \cite[Theorem 3]{MoTa} reads 
$v_\ell({\cal D}(L/\Qlunr)) = 2 + \frac{1}{(\ell-1)\ell} - \frac{2}{(\ell-1)\ell^m}$
so that we obtain
\[
k-1 \,=\, \ell \,=\, 
\frac{(\ell-1)\ell^m}{(\ell^m-1)q}
\cdot\big(
v_\ell(\Disc(K/\QQ)) - q + 1
\big)
\,+\, \frac{(\ell-1)(\ell^{m-1} - 1) + 1}{\ell^m - 1} - \frac{\ell^m}{q(\ell^m -1)},
\]
which implies the formula in the proposition as well.
\end{proof}
If $\tilde\rho$ is tamely ramified at $\ell$, 
it seems less clear how to give a direct way of 
computing the Serre weight of a projective representation in terms of the 
number field attached to it.  However, in that case 
we always have the upper bound 
$k(\tilde\rho)\leq (\ell + 3)/2$, which is in practice sufficient to 
reduce the number of possibilities to a tractable size.

\subsection{Finding the correct form}\label{subsec:form}
Once we know the level $N$ and weight $k$ of a given representation
$\tilde\rho\colon\GQ\to\PGL_2(\FF_q)$, 
with $q\geq 4$ a power of a prime $\ell$,
we wish to find an actual newform giving rise to it.  As everywhere in this
paper, we assume $\im(\tilde\rho)\supset\PSL_2(\FF_q)$.  
We also assume that $N$ is square-free and  
$k\not=1$; both conditions hold for all
representations attached to the polynomials in Section 
\ref{section:polynomials}.

To apply Serre's conjectures and conclude that there is a modular form
associated with $\tilde\rho$, we have to verify that $\tilde\rho$ is odd.
If $q$ is a power of $2$, this is automatic, so we assume $q$ is odd.
In that case, $\tilde\rho$ is odd if and only if the action of a complex
conjugation on $\PP^1(\FF_q)$ is non-trivial, which holds if and only if the number field
attached to $\tilde\rho$ is not totally real.  
None of the polynomials in Section \ref{section:polynomials} defines a totally
real number field hence all our representations are odd. 

We can list all the newforms in $S_k(\Gamma_1(N))$; 
the book \cite{StComp} deals extensively with the computational 
ingredients needed for this task.  If we can eliminate all but one of them,
then by Serre's conjectures, the remaining one must give rise to $\tilde\rho$.
We will therefore list a few tricks that can be used to prove 
$\tilde\rho\not\cong\tilde\rho_{f,\lambda}$.  We do not claim that these
tricks are in any way sufficient for the elimination process in general, 
but they do turn out to be sufficient for the 
representations occurring in this paper.

Let $f=\sum a_nq^n\in S_k(\Gamma_1(N))$ 
be a newform and consider its coefficient
field $K_f$.  The field $K_f$ is either a totally real field or 
a totally imaginary quadratic extension of a totally real field.  
Its subfield $F_f$ generated by all elements $a_p^2/\varepsilon_f(p)$ 
for $p\nmid N$ is totally real, as can be seen from properties of the
Petersson inner product.  In many cases, it turns out that
there is a small prime $p$ 
such that $\QQ(a_p^2/\varepsilon_f(p))$ is the maximal totally 
real subfield of $K_f$ and thus equal to $F_f$.
In view of \eqref{eq:theta}, the following proposition
is immediate.

\begin{prop}
With the notations and assumptions from above, 
let $\lambda$ be a prime of $K_f$, 
lying over the prime $\lambda'$ of $F_f$. 
Assume that there is a prime $p\nmid N\ell$ such that
${\cal O}_{F_f}/\lambda'$ is generated by $a_p^2/\varepsilon_f(p)$
over $\FF_\ell$.
If there is
an embedding $\FF_q\hookrightarrow{\cal O}_{K_f}/\lambda$
giving rise to an isomorphism $\tilde\rho\cong\tilde\rho_{f,\lambda}$,
then we have an isomorphism ${\cal O}_{F_f}/\lambda'\cong \FF_q$. \qed
\end{prop}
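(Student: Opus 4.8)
The plan is to read off the values of the function $\theta$ of~\eqref{eq:theta} on Frobenius elements, using~\eqref{eq:charpolrho} together with the relation $\det\rho=\varepsilon_f\chi_\ell$ recorded in Section~\ref{section:comput}, and then to play the two constraints these values satisfy off against each other: they must lie in the small field ${\cal O}_{F_f}/\lambda'$, but through the postulated isomorphism they must also fill up the image $\iota(\FF_q)$ of $\FF_q$ inside ${\cal O}_{K_f}/\lambda$.

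First I would fix a prime $p\nmid N\ell$. From~\eqref{eq:charpolrho} we have $\tr\rho(\Frob_p)\equiv a_p$ and $\det\rho(\Frob_p)\equiv\varepsilon_f(p)p\pmod\lambda$; since $\varepsilon_f(p)$ is a root of unity, hence a $\lambda$-unit, and $p\neq\ell$, the element $\theta\bigl(\tilde\rho_{f,\lambda}(\Frob_p)\bigr)$ is the reduction modulo $\lambda$ of $a_p^2/(\varepsilon_f(p)p)$. As $a_p^2/\varepsilon_f(p)\in F_f$ and $p\in\QQ$, this reduction lies in the subfield ${\cal O}_{F_f}/\lambda'$ of ${\cal O}_{K_f}/\lambda$; and since $\ov p\in\FF_\ell^\times$, the hypothesis that $a_p^2/\varepsilon_f(p)$ generates ${\cal O}_{F_f}/\lambda'$ over $\FF_\ell$ is equivalent to the same being true of $\theta\bigl(\tilde\rho_{f,\lambda}(\Frob_p)\bigr)$. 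Moreover, since $\ker(\tilde\rho_{f,\lambda})$ cuts out a finite extension of $\QQ$ unramified outside $N\ell$, the Chebotarev density theorem shows that every element of $\im(\tilde\rho_{f,\lambda})$ is conjugate to some $\tilde\rho_{f,\lambda}(\Frob_p)$ with $p\nmid N\ell$; as $\theta$ is conjugation-invariant, this gives $\theta\bigl(\im(\tilde\rho_{f,\lambda})\bigr)\subseteq{\cal O}_{F_f}/\lambda'$.

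Next I would bring in the isomorphism $\tilde\rho\cong\tilde\rho_{f,\lambda}$ realised through $\iota\colon\FF_q\hookrightarrow{\cal O}_{K_f}/\lambda$. Because $\theta=\tr^2/\det$ is invariant under conjugation in $\PGL_2$ and compatible with field embeddings, this isomorphism forces $\theta\bigl(\tilde\rho_{f,\lambda}(\sigma)\bigr)=\iota\bigl(\theta(\tilde\rho(\sigma))\bigr)$ for every $\sigma\in\GQ$. Since $\im(\tilde\rho)\supset\PSL_2(\FF_q)$, the proposition derived from Dickson's classification in Section~\ref{section:comput}, applied to $\PSL_2(\FF_q)\subseteq\im(\tilde\rho)$ together with the fact that $\theta$ takes values in $\FF_q$, gives $\theta\bigl(\im(\tilde\rho)\bigr)=\FF_q$, hence $\theta\bigl(\im(\tilde\rho_{f,\lambda})\bigr)=\iota(\FF_q)$. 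Combining this with the previous paragraph yields $\iota(\FF_q)\subseteq{\cal O}_{F_f}/\lambda'$; conversely, for the special prime $p$ of the hypothesis the value $\theta\bigl(\tilde\rho_{f,\lambda}(\Frob_p)\bigr)$ lies in $\iota(\FF_q)$ and generates ${\cal O}_{F_f}/\lambda'$ over $\FF_\ell\subseteq\iota(\FF_q)$, so ${\cal O}_{F_f}/\lambda'\subseteq\iota(\FF_q)$. Therefore $\iota(\FF_q)={\cal O}_{F_f}/\lambda'$ as subfields of ${\cal O}_{K_f}/\lambda$, which is the asserted isomorphism.

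The one step that needs genuine care --- and the reason the statement, though ``immediate'', is not a literal one-liner --- is the passage from an abstract isomorphism of projective representations over $\Flbar$ to the pointwise identity $\theta\circ\tilde\rho_{f,\lambda}=\iota\circ(\theta\circ\tilde\rho)$: one must use that the given $\iota$ is exactly the embedding realising the isomorphism, and that $\theta$ genuinely descends to $\PGL_2$ and commutes with both conjugation and $\iota$. Everything else is substitution into~\eqref{eq:theta} together with Chebotarev and the results already established above.
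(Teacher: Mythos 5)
Your proof is correct and is exactly the argument the paper intends: the paper states this proposition with no written proof (it is declared ``immediate'' in view of \eqref{eq:theta}), and the intended justification is precisely your combination of the two observations that $\theta(\tilde\rho_{f,\lambda}(\Frob_p))\equiv a_p^2/(\varepsilon_f(p)p^{k-1})\bmod\lambda$ lies in ${\cal O}_{F_f}/\lambda'$ (for all $p\nmid N\ell$, hence for the whole image by Chebotarev) while, via the Dickson-based proposition of Section~\ref{section:comput}, these values fill up $\iota(\FF_q)$, forcing $\iota(\FF_q)={\cal O}_{F_f}/\lambda'$. The only cosmetic slip is quoting \eqref{eq:charpolrho} with $\det\rho(\Frob_p)\equiv\varepsilon_f(p)p$ in the general weight-$k$ setting of Subsection~\ref{subsec:form}, where it should be $\varepsilon_f(p)p^{k-1}$; since $p^{k-1}$ reduces into $\FF_\ell^\times$, this changes nothing in the argument.
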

\begin{rem}
The set of primes $p$ with $F_f=\QQ(a_p^2/\varepsilon_f(p))$ has density $1$
(see \cite[Theorem~1]{KooStWie}), so one can expect that the condition on 
$\lambda'$ is almost always satisfied.
In any case we do have an embedding 
$\FF_q\hookrightarrow {\cal O}_{F_f}/\lambda'$.
\end{rem}
So for given $f$, we do the following.  Determine all primes $\lambda'$ 
of $F_f$ above $\ell$ whose degree is a multiple of $[\FF_q:\FF_\ell]$.
Check whether for each $\lambda'$ the residue field of $\lambda'$ 
is generated by
$a_p^2/\varepsilon(p)$ for some small prime $p$.  If this is the case,
then we can reject $f$ if none of the $\lambda'$ have degree $[\FF_q:\FF_\ell]$.
This criterion can often be verified without difficulty. 

Using the above trick we can already eliminate a lot of pairs $(f,\lambda)$,
but some of them remain.  For a small prime $p\nmid N\ell$ we can compute
\[
\theta(\tilde\rho_{f,\lambda}(\Frob_p)) \equiv 
\frac{a_p^2}{\varepsilon_f(p)p^{k-1}}
\bmod\lambda
\]
and check whether it is equal to~$4$. 
For a given $\gamma\in\GL_2(\FF_q)$ we have $\theta(\gamma)=4$ 
if and only if the eigenvalues of $\gamma$ are equal if and only if the action
of $\gamma$ on $\PP^1(\FF_q)$ is either trivial or has exactly $1$ fixed point.
Thus for $p\nmid N\ell$, we have $\theta(\tilde\rho(\Frob_p))=4$ if and only
if $p$ either splits completely or has exactly $1$ factor of degree $1$ over
the number field attached to $\tilde\rho$.  So we check for many small 
primes $p\nmid N\ell$ whether $\theta(\tilde\rho_{f,\lambda}(\Frob_p))$
and $\theta(\tilde\rho(\Frob_p))$ are simultaneously equal to~$4$. 

Of the polynomials in Section \ref{section:polynomials}, only one can stand up to
 the above tricks, namely the one with Galois group 
$\PSL_2(\FF_{32})$.  So assume for the rest of the subsection 
that $q$ is a power of~$2$. We then have 
$\PGL_2(\FF_q)\cong\PSL_2(\FF_q)\cong\SL_2(\FF_q)$.  So $\tilde\rho$ is a lifting
of itself and under the assumption that $N(\tilde\rho)$ be square-free, this
lifting is minimal.  
Thus we are dealing with a representation whose image lands inside $\SL_2(\FF_q)$.
A theorem of Buzzard \cite[Corollary~2.7]{Buzz}
now shows that there exists a newform $f$
of trivial nebentypus giving rise to $\tilde\rho$, so we may assume
$f\in S_k(\Gamma_0(N))$.

Applying this to our particular $\PSL_2(\FF_{32})$-polynomial, 
still two newforms $f$ and $f'$ in $S_k(\Gamma_0(N))$ are
not rejected by the above tricks.  So we have prime ideals $\lambda$
and $\lambda'$ of their respective coefficient fields
such that $f\bmod\lambda$ and $f'\bmod\lambda'$ could both
give rise to $\tilde\rho$.  In fact, it turns out that, after a suitable
isomorphism between the respective residue fields, $f\bmod\lambda$ is
equal to $f'\bmod\lambda'$.  This can be shown by verifying 
$\ov{a_n(f)}=\ov{a_n(f')}$ for $n$ up to the so-called Sturm bound 
$[\SL_2(\ZZ):\Gamma_0(N)]/6$, which is up to $n=27$ in our particular case
(see \cite[Theorem~1]{Stu}). 

\begin{rem}
The forms $f$ and $f'$ have eigenvalues $+1$ and $-1$ for the Atkin-Lehner
operator $W_N$.  In general, for $N$ prime and $k=2$, a result of Mazur 
\cite[Proposition~10.6]{MazEis} shows that the spectrum of the Hecke algebra 
attached to $S_k(\Gamma_0(N))$ is connected.  This implies the existence 
of certain congruences between newforms.  In particular, if the $+1$-eigenspace 
$S_2(\Gamma_0(N))^+$ of $W_N$ on $S_2(\Gamma_0(N))$ is non-zero, 
there always exists a newform in $S_2(\Gamma_0(N))^{+}$ that is congruent to
a newform in $S_2(\Gamma_0(N))^{-}$ modulo some prime $p$. 
We must have $p=2$ as $+1$ and $-1$ cannot be congruent modulo any other prime.
\end{rem}

\section{Polynomials}\label{section:polynomials}
This section displays the polynomials that were computed for this paper.
All but the last one of the subsections have a group as title.  
This group is claimed to be the Galois group of all the polynomials occurring 
in that subsection.  The final subsection contains information about the 
ramification properties of the polynomials.

\subsection{$\PSL_2(\FF_{25})$}
\small
\[\eqalign{
&x^{26} + 25x^{24} - 90x^{23} + 410x^{22} - 2174x^{21} + 7915x^{20} 
- 24445x^{19} + 82385x^{18} - 174360x^{17} + 340352x^{16}\cr&{} - 596725x^{15} 
+ 606925x^{14} - 845215x^{13} + 2199840x^{12}- 1523031x^{11} + 203295x^{10}
 - 2102590x^{9} \cr&{}+ 1804065x^{8} - 28770x^{7} - 35747x^{6} 
+ 674760x^{5} - 134800x^{4} + 150735x^{3} - 2885x^{2} + 64x - 5
}\]
\smallskip
\[
\eqalign{
&x^{26} - 11x^{25} + 45x^{24} - 240x^{23} + 1425x^{22} - 4005x^{21} + 12885x^{20}
 - 50435x^{19} + 53555x^{18} - 142870x^{17} \cr&{}+ 503050x^{16} + 1144115x^{15} 
 - 1778920x^{14} - 3596690x^{13} - 26810705x^{12} + 72895865x^{11} 
\cr&{} + 110135765x^{10} + 472613940x^{9} - 1155934625x^{8} - 4427715545x^{7} 
 - 7223127110x^{6} - 17420055270x^{5} \cr&{}+ 2907221810x^{4} - 16043305910x^{3} 
 + 21674938395x^{2} + 14749741397x - 14641021707
}\]
\smallskip
\[
\eqalign{
&x^{26} + 8x^{25} + 35x^{24} + 160x^{23} - 20x^{22} - 130x^{21} + 9095x^{20} 
- 13020x^{19} - 43680x^{18} + 302710x^{17}\cr&{} - 420530x^{16} - 654320x^{15} 
+ 4610695x^{14} - 8622900x^{13} + 2477755x^{12} + 22760620x^{11} \cr&{}- 74710515x^{10} 
+ 87489200x^{9} - 50319960x^{8} + 23366430x^{7} - 50415455x^{6} - 166077740x^{5} 
\cr&{}+ 289509200x^{4} + 186724650x^{3} - 452029570x^{2} + 159622636x - 103539627
}\]\normalsize

\subsection{$\PSL_2(\FF_{32})$}
\small
\[\eqalign{
&x^{33} - 16x^{32} + 108x^{31} - 396x^{30} + 980x^{29} - 3000x^{28} 
+ 12404x^{27} - 35920x^{26} + 52252x^{25} - 12200x^{24} 
\cr&{}- 56484x^{23} 
+ 54996x^{22} - 56164x^{21} + 101320x^{20} + 90972x^{19} - 226860x^{18} 
- 92456x^{17} - 106536x^{16}\cr&{} + 299784x^{15} + 681744x^{14} - 308904x^{13} 
- 863008x^{12} - 67040x^{11} + 431272x^{10} + 192632x^9 - 9696x^8 
\cr&{}- 1416x^7 - 2888x^6 - 6600x^5 + 2800x^4 + 696x^3 - 632x^2 + 68x - 16
}\]\normalsize

\subsection{$\PSL_2(\FF_{49})$}
\small
\[
\eqalign{
&x^{50} + 14x^{48} - 133x^{47} - 112x^{46} - 1295x^{45} - 378x^{44} - 5929x^{43}
- 20643x^{42} + 293209x^{41} + 906654x^{40} \cr&{}+ 6607265x^{39} 
+ 24040177x^{38} + 
 85681897x^{37}  + 473415579x^{36} + 1538779634x^{35} 
+ 5045381579x^{34} 
 \cr&{}+ 17364043354x^{33} + 49737600486x^{32} + 172099058782x^{31} 
+ 
 417122339060x^{30} + 1354316398652x^{29} \cr&{}+ 3528932603770x^{28} 
 + 7809511870860x^{27} + 28405678075796x^{26} + 42739389341075x^{25} 
\cr&{} +124662200818617x^{24} + 270315747916557x^{23} + 494771507303808x^{22} + 
1033886525397236x^{21} \cr&{}+ 2097338665080414x^{20} + 3256453013950549x^{19} + 
4773474576206007x^{18} \cr&{}+ 17415459260623270x^{17} - 10288584860072456x^{16} 
+ 69831032535759796x^{15} 
\cr&{}-73209901903545764x^{14} + 211847950318229554x^{13} 
- 484953362514826317x^{12} 
\cr&{}+ 1233922356068052511x^{11} - 2544352497290479589x^{10} 
+ 4386728967245371033x^9 
\cr&{}- 5890119836852604710x^8 + 5642420291895645202x^7 - 1522273119205843039x^6 
\cr&{}- 1608191474819379639x^5 - 985882169176584092x^4 
+ 2132948153097061258x^3 
\cr&{}- 165342791798420467x^2 - 71582764911979429x + 7908857674762849
}\]
\smallskip
\[\eqalign{
&x^{50} - 5x^{49} - 77x^{48} + 329x^{47} + 3843x^{46} - 13874x^{45} 
- 131754x^{44} + 430178x^{43} + 3242540x^{42} \cr&{} - 9995482x^{41} - 60873015x^{40} 
+ 191874340x^{39} + 812024087x^{38} - 2785593678x^{37} \cr&{}- 7329384580x^{36} 
+ 26805294425x^{35} + 51738967427x^{34} - 163941114631x^{33} 
- 414645913171x^{32} \cr&{}+ 759116077097x^{31} + 3774174093592x^{30} 
- 6482891887052x^{29} - 14580121639230x^{28} \cr&{}+ 12142740277948x^{27} 
+ 113966950745802x^{26} - 17806982973332x^{25} - 1405472958758232x^{24} 
\cr&{}+ 4595833892032558x^{23} - 9516541438774671x^{22} + 31704705422352872x^{21} 
\cr&{}- 116138484174279574x^{20} + 264431039635704172x^{19} - 494002668821182528x^{18} 
\cr&{}+ 1362351868639873993x^{17} - 3448748331607098429x^{16} 
+ 5694479877938233865x^{15} \cr&{}- 9894493925776418252x^{14} 
+ 22915549471984648416x^{13} - 39473448044982762734x^{12} 
\cr&{}+ 55367518860559248182x^{11} - 92744284275900788951x^{10} 
+ 144645718920022553002x^{9} \cr&{}- 202308373939366049761x^{8} 
+ 272248056577059876663x^{7} - 284654408160120598600x^{6} 
\cr&{}+ 310646322644102048632x^{5} - 467838768538599148516x^{4} 
+ 185233561060467551772x^{3} \cr&{}- 489713859491859418738x^{2} 
- 124345023465677984401x - 248368725729104252373
}\]\normalsize

\subsection{$\PGL_2(\FF_{25})$}
\small\[\eqalign{
&x^{26} - 5x^{25} + 15x^{24} - 30x^{23} + 65x^{22} - 510x^{21} + 1460x^{20} 
- 1520x^{19} - 5525x^{18} + 29065x^{17} - 48510x^{16} \cr&{}+ 56150x^{15} 
+ 74695x^{14} - 28595x^{13} + 124915x^{12} + 430280x^{11} + 555465x^{10} 
+ 318535x^{9} + 805335x^{8} \cr&{}+ 1621715x^{7} + 1764955x^{6} + 950255x^{5} 
+ 229675x^{4} - 10010x^{3} - 5560x^{2} + 1984x + 425}
\]\normalsize

\subsection{$\PGL_2(\FF_{27})$}
\small\[
\eqalign{
&x^{28} - 13x^{27} + 69x^{26} - 144x^{25} - 252x^{24} + 2451x^{23} 
- 6957x^{22} + 8433x^{21} + 8103x^{20} - 64617x^{19}
\cr&{} + 187452x^{18} - 406998x^{17} + 734271x^{16} - 1114407x^{15} 
+ 1436532x^{14} - 1653204x^{13} + 1777944x^{12} \cr&{}- 1653399x^{11} 
+ 1189149x^{10} - 767391x^{9} + 511770x^{8} - 130359x^{7} + 16974x^{6} 
- 106098x^{5} - 49980x^{4} \cr&{}- 20697x^{3} - 6915x^{2} - 989x - 529
}\]
\smallskip
\[\eqalign{
&x^{28} - 7x^{27} + 51x^{26} - 210x^{25} + 843x^{24} - 2343x^{23} + 6645x^{22} -
12666x^{21} + 26937x^{20} - 25680x^{19}\cr&{} + 42918x^{18} + 73236x^{17} - 28737x^{16}
+ 589764x^{15} + 137034x^{14} + 1898235x^{13} + 2535021x^{12}\cr&{} + 5783667x^{11}
+ 11729181x^{10} + 19459167x^{9} + 34925964x^{8} + 46972173x^{7} + 62946807x^{6}
\cr&{} + 54973245x^{5} + 47069826x^{4} + 10921458x^{3} + 884292x^{2} - 11509304x - 2199865
}\]
\smallskip
\[\eqalign{
&x^{28} - x^{27} - 18x^{25} - 45x^{24} - 231x^{23} + 588x^{22} + 1548x^{21} -
816x^{20} + 1785x^{19} - 810x^{18} + 10632x^{17} \cr&{}- 39222x^{16} + 24270x^{15} 
+ 127512x^{14} - 52701x^{13} + 34995x^{12} - 1002237x^{11} + 1837884x^{10} 
\cr&{}- 1396431x^{9} + 2893974x^{8} - 5570163x^{7} + 4814445x^{6} - 3440091x^{5} + 
4975905x^{4}\cr&{} - 4773414x^{3} + 1350804x^{2} + 46475x + 265837
}\]\normalsize

\subsection{Ramification of the fields}
The following table contains information on the ramification properties
of each of the number fields defined by a polynomial given above.  Each row
of the table corresponds to one of the polynomials, sorted on Galois group. 
In case we have more than one field with a given Galois group, the order 
given in the table corresponds to the order in which the polynomials 
are displayed above.  

For each of the fields, the absolute value of the discriminant 
and the decomposition types of the
ramified primes are given.  The notation for the decomposition type of a given
prime $p$ with respect to given number field $K$ is to be read
as follows.  If the primes of $K$ above $p$ are 
$\mathfrak{p}_1,\ldots,\mathfrak{p}_r$ and for each $i$ the inertia and ramification
degrees of $\mathfrak{p}_i$ are $f_i$ and $e_i$ respectively,
then we denote this by
the expression $f_1^{e_1}\cdots f_r^{e_r}$.  
In the cases where there are $n>2$ factors of the same type $f^e$, we abbreviate
this as $(f^e)^n$.

The columns labelled $N$ and $k$ indicate the level and weight of the associated
representation, according to Propositions \ref{prop:level} and
\ref{prop:weight} respectively.  The corresponding cusp form $f$, determined 
using the methods from Subsection~\ref{subsec:form}, is written down in the
following concise way.  In all cases it has turned out that the coefficient 
$a_2(f)$
generates the coefficient field and that giving $N$, $k$, and the minimal 
polynomial of $a_2$ does pin down the Galois orbit of $f$.  
So we specify $f$ by writing down the minimal polynomial of~$a_2(f)$.

\begin{center}
\begin{tabular}{llr@{$\colon$}l@{ }r@{$\colon$}lrll}
\toprule
Group & Discr. & \multicolumn{4}{l}{Decomposition types} 
& $N$ & $k$ & Minimal pol. of $a_2$\cr
\midrule
$\PSL_2(\FF_{25})$ &
$5^{28}\cdot 29^{20}$ & 
$5$ & $1^1 1^5 1^{20}$ & $29$& $1^1 1^5 2^5 2^5$ & 29 & 2 
& $x^2+2x-1$\cr
& $5^{30}\cdot 41^{20}$ & $5$ & $ 1^1 1^{25}$ & $41$ & $ 1^1 5^5$ & 41 & 2
& $x^3+x^2-5x-1$\cr
& $5^{30}\cdot 43^{20}$ & $5$ & $ 1^1 1^{25}$ & $43$ & $ 1^1 1^5 4^5$ & 43 & 2
& $x^2-2$\cr
\midrule
$\PSL_2(\FF_{32})$ & 
$2^{62}\cdot 157^{16}$ & $2$ & $ 1^1 1^{32}$ & $157$ & $ 1^1 (2^2)^8$ & 157 & 2
& $x^5+\cdots+1,\,\, W_N(f)=f$\cr
\midrule
$\PSL_2(\FF_{49})$ 
& $7^{56}\cdot 23^{42}$ & $7$ & $ 1^1 1^{49}$ & $23$ & $ 1^1 1^7 3^7 3^7$ & 23 & 2
& $x^2 + x - 1$\cr
& $7^{56}\cdot 31^{42}$ & $7$ & $ 1^1 1^{49}$ & $31$ & $ 1^1 1^7 6^7$ & 31 & 2
& $x^{2} - x - 1$\cr
\midrule
$\PGL_2(\FF_{25})$ & 
$5^{30}\cdot 17^{21}$ & $5$ & $ 1^1 1^{25}$ & $17$ & $ 1^1 1^1 3^8$ & 17 & 2
& $x^{4} + 4 x^{3} + 8 x^{2} + 4 x + 1$\cr
\midrule
$\PGL_2(\FF_{27})$ 
& $3^{39}\cdot 41^{18}$ & $3$ & $ 1^1 1^{27}$ & $41$ & $ 1^1 1^3 (2^3)^4$ & 41 & 2
& $x^{3} + x^{2} - 5 x - 1$\cr
& $3^{39}\cdot 47^{18}$ & $3$ & $ 1^1 1^{27}$ & $47$ & $ 1^1 1^3 (2^3)^4$ & 47 & 2
& $x^{4} - x^{3} - 5 x^{2} + 5 x - 1$\cr
& $3^{39}\cdot 53^{18}$ & $3$ & $ 1^1 1^{27}$ & $53$ & $ 1^1 1^3 (2^3)^4$ & 53 & 2
& $x^{3} + x^{2} - 3 x - 1$\cr
\bottomrule
\end{tabular}
\end{center}

\end{document}